\DeclareMathOperator{\spann}{span}
\DeclareMathOperator{\Ann}{Ann}
\DeclareMathOperator{\Hom}{Hom}
\DeclareMathOperator{\GL}{GL}
\DeclareMathOperator{\Orb}{Orb}
\DeclareMathOperator{\gr}{gr}
\newtheorem{pr}{Proposition}
\newtheorem{cor}{Corollary}
\newtheorem{de}{Definition}
\newtheorem{teo}{Theorem}
\begin{document}
\title[Solvable Leibniz algebra with nilradical $F_n^1$ and its rigidity]{Solvable Leibniz algebra with non-Lie and non-split naturally graded filiform nilradical and its rigidity}

\author{M.~Ladra\textsuperscript{1}, K.~K.~Masutova\textsuperscript{1},  B.~A.~Omirov\textsuperscript{2}}

\address{\textsuperscript{1}Department of Algebra, University of Santiago de Compostela, 15782 Santiago de Compostela, Spain, manuel.ladra@usc.es, kamilyam81@mail.ru}
\address{\textsuperscript{2}Institute of Mathematics, National University of Uzbekistan, 100125, Tashkent,  Uzbekistan, omirovb@mail.ru}

\begin{abstract}

The description of complex solvable Leibniz algebras whose nilradical is a naturally graded filiform algebra is already known.
 Unfortunately, a mistake was made in that description. Namely, in the case where the dimension of the solvable Leibniz algebra with nilradical $F_n^1$ is equal to $n+2$,
  it was asserted that there is no such algebra. However, it was possible for us to find a unique $(n+2)$-dimensional solvable Leibniz algebra with nilradical $F_n^1$.
   In addition, we establish the triviality of the second group of cohomology for this algebra with coefficients in itself, which implies its rigidity.

\end{abstract}
\subjclass[2010]{17A32, 17A36, 17B30.}

\keywords{Leibniz algebra, natural graduation, filiform algebra, solvability, nilpotency, nilradical, derivation, group of cohomology, rigidity.}

\maketitle

\section{Introduction}

Recall that Leibniz algebras are a generalization of Lie algebras \cite{LoPi,Lod}.
 Active investigations on Leibniz algebra theory show that many results of the theory of Lie algebras can be extended to Leibniz algebras.
 From the theory of Lie algebras it is well known that the study of finite-dimensional Lie algebras can be reduced to
 the solvable \cite{Jac} and, due to the work of Malcev \cite{Mal}, to nilpotent ones.  In the Leibniz algebra case we have an analogue of Levi's
theorem, due to the result of  Barnes \cite{Bar}, that the decomposition of a Leibniz algebra into a semidirect sum of its solvable
radical and a semisimple Lie algebra. Therefore, the study of finite-dimensional Leibniz algebras can be reduced to solvable ones.

The methods of Mubarakzjanov \cite{Mub} allow to describe solvable Lie algebras with a given nilradical.
Applications of these methods were used in several articles which are devoted to the study of solvable Lie algebras with various types of nilradical,
 such as naturally graded filiform and quasi-filiform algebras, abelian, triangular, etc.
\cite{AnCaGa1,AnCaGa2,NdWi,TrWi,WaLiDe}. The extension of these methods to the Leibniz algebras case was proposed in \cite{CLOK2}.

Let $V$ be an $n$-dimensional vector space over the field of complex numbers. The bilinear maps $V\times V \to V$ form an $n^3$-dimensional affine space $B(V)$ over $\mathbb{C}$.
It is known that an $n$-dimensional Leibniz algebra may be considered as an element $\lambda$ of the affine variety
$\Hom(V\otimes V,V)$ via the bilinear mapping $\lambda \colon V\otimes V\to V$, defining the Leibniz bracket on the vector space $V$.
 Since Leibniz algebras are defined via polynomial identities, the set of Leibniz algebra structures forms an algebraic subset of the variety $\Hom(V\otimes V,V)$
  and the linear reductive group $\GL_n(F)$ acts on the set as follows:
\[(g*\lambda)(x,y)=g(\lambda(g^{-1}(x),g^{-1}(y))).\]
The orbits ($\Orb(-)$) under this action are the  isomorphism classes of algebras.

 Recall that a set is called irreducible if it cannot be represented as a union of two non-trivial closed subsets (with respect to the Zariski topology in  $B(V)$),
otherwise it is called reducible. The maximal irreducible closed subset of a variety is called an irreducible component.
 From algebraic geometry we know that an algebraic variety is a union of irreducible components and that closures of open sets give rise to irreducible components.
  Therefore, for the description of a variety it is very important to find all open sets.
 Since under the above action the variety of Leibniz algebras consists of orbits of algebras, the description of the variety is reduced to  finding the open orbits.
 In any variety of algebras there are algebras with open orbits (so-called rigid algebras). Thus, the closures of orbits of rigid algebras give irreducible components of the variety.
 Hence, in order to describe the variety of algebras it is enough to describe all rigid algebras.
 Due to the work \cite{Bal}, we can apply the general principles for deformations and rigidity of Leibniz algebras. In fact, it is proved that the
nullity of the second cohomology group with coefficients in itself gives a sufficient condition for the rigidity of the algebra.

In this paper we fix a mistake which was made in the description of complex $(n+2)$-dimensional solvable Leibniz algebras whose nilradical
 is a non-Lie and non-split naturally graded filiform algebra (see \cite{CLOK1}). Namely, in that description \cite{CLOK1}, the authors assert that there is no such solvable Leibniz algebra.
  However, we find in our ameliorated description that  there exits a unique $(n+2)$-dimensional solvable algebra Leibniz with nilradical $F_n^1$.
   Moreover, we establish that the second group of cohomology of this Leibniz algebra is trivial, and consequently it is a rigid algebra.

  We will briefly describe the organization of the paper. In Section~\ref{sec:preliminaries} we recall  definitions and known results.
In Section~\ref{sec:main}  we find a unique $(n+2)$-dimensional solvable algebra Leibniz $R(F_n^1)$ with nilradical $F_n^1$.
   Moreover, we establish the triviality of the second group of cohomology of $R(F_n^1)$ with coefficients in itself, and  therefore its rigidity.

\section{Preliminaries} \label{sec:preliminaries}

In this section we give necessary definitions and preliminary results.

\begin{de}[\cite{Lod}] A Leibniz algebra over a field $\mathbb{F}$ is a vector space $L$
equipped with a bilinear map, called bracket,
\[[-,-] \colon  L \times  L \rightarrow  L,\]
satisfying the Leibniz identity
\[ \big[x,[y,z]\big]=\big[[x,y],z\big]-\big[[x,z],y\big], \]
for all $x,y,z \in  L$.
\end{de}

The set $\Ann_r(L)=\{x \in L \mid [y,x]=0, \  y \in L\}$ is
called \emph{the right annihilator of the Leibniz algebra $L$}. Note
that $\Ann_r(L)$ is an ideal of $L$ and for any $x, y \in L$ the
elements $[x,x]$, $[x,y]+ [y,x]\in \Ann_r(L)$.

\begin{de} A linear map $d \colon L \rightarrow L$ of a Leibniz algebra $(L, [-, -])$ is said to be a derivation if for all $x, y \in L$ the following condition holds:
\[d([x,y])=[d(x),y] + [x, d(y)].\]
\end{de}

\subsection{Solvable Leibniz algebras.}

For a Leibniz algebra $L$ we consider the following \emph{lower central} and
\emph{derived series}:
\begin{align*}
 L^1& =L  \qquad \ \ L^{k+1} =[L^k,L^1], \qquad  \ \ \ k \geq 1; \\
 L^{[1]} & = L, \qquad L^{[s+1]} = [L^{[s]}, L^{[s]}], \qquad s \geq 1.
\end{align*}

\begin{de} A Leibniz algebra $L$ is said to be nilpotent (respectively,
solvable), if there exists $n\in\mathbb N$ ($m\in\mathbb N$) such
that $L^{n}=0$ (respectively, $L^{[m]}=0$).

\end{de}

Obviously, the index of nilpotency of an $n$-dimensional nilpotent
Leibniz algebra is not greater than $n+1$.

It should be noted that the sum of any two nilpotent (solvable) ideals is
nilpotent (solvable).

\begin{de} The  maximal nilpotent (solvable) ideal of a Leibniz algebra is said to be a nilradical (solvable radical) of the algebra.
\end{de}

\begin{de} A Leibniz algebra $L$ is said to be filiform if $\dim L^i=n-i$, where $n=\dim L$ and $2\leq i \leq n$.
\end{de}

Now let us define a natural graduation for a filiform Leibniz algebra.

\begin{de} Given a filiform Leibniz algebra $L$, put $L_i=L^i/L^{i+1}, \ 1 \leq i\leq n-1$,  and $\gr(L) = L_1 \oplus
L_2\oplus\dots \oplus L_{n-1}$. Then $[L_i,L_j]\subseteq L_{i+j}$ and we obtain the graded algebra $\gr(L)$.
 If $\gr(L)$ and $L$ are isomorphic, then we say that the algebra $L$ is naturally graded.
\end{de}

Due to  \cite{AyOm2} and \cite{Ver} it is known that there
are three naturally graded filiform Leibniz algebras. In
fact, the third type encloses the class of naturally graded
filiform Lie algebras.

\begin{teo} Any complex $n$-dimensional naturally graded filiform
Leibniz algebra is isomorphic to one of the following pairwise non
isomorphic algebras:
\begin{align*}
F_n^1: &  \  [e_i,e_1]=e_{i+1}, \qquad  \qquad  \qquad    \qquad  \qquad  \qquad \quad \ \  2\leq i \leq {n-1},\\
F_n^2: & \  [e_i,e_1]=e_{i+1}, \qquad  \qquad  \qquad  \qquad  \qquad   \qquad \quad \ \ 1\leq i \leq {n-2},\\
F_n^3(\alpha): &  \  \begin{cases}
[e_i,e_1]=-[e_1,e_i]=e_{i+1}, &
2\leq i \leq {n-1},\\
[e_i,e_{n+1-i}]=-[e_{n+1-i},e_i]=\alpha (-1)^{i+1}e_n, & 2\leq i\leq n-1,
\end{cases}
\end{align*}
where $\alpha\in\{0,1\}$ for even $n$ and $\alpha=0$ for odd $n$.
\end{teo}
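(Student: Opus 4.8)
The plan is to work inside the associated graded algebra. Since $L$ is naturally graded we may assume $L=\gr(L)=L_1\oplus\cdots\oplus L_{n-1}$ with $[L_i,L_j]\subseteq L_{i+j}$. From $\dim L^i=n-i$ ($2\le i\le n$) one reads off $\dim L_1=\dim L/L^2=2$ and $\dim L_i=1$ for $2\le i\le n-1$, so $n\ge 3$; unwinding the lower central series gives, in $\gr(L)$, the key identity $L_{i+1}=[L_i,L_1]$ for $1\le i\le n-2$. In particular $[L_1,L_1]=L_2\ne 0$, and for $i\ge 2$ each right multiplication $R_y\colon L_i\to L_{i+1}$ ($y\in L_1$) is a map between lines, hence $0$ or an isomorphism.

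First I would fix an adapted basis. For $2\le i\le n-2$ the map $L_1\to\Hom(L_i,L_{i+1})\cong\mathbb C$, $y\mapsto R_y|_{L_i}$, is nonzero (otherwise $L_{i+1}=0$), so over $\mathbb C$ one can pick $e_1\in L_1$ with $R_{e_1}|_{L_i}$ an isomorphism for all these $i$ at once; extend $e_1$ to a basis $\{e_1,f\}$ of $L_1$. The invariant that organizes the three cases is the quadratic map $q\colon L_1\to L_2$, $q(y)=[y,y]$ (well defined, since $[L^2,L]+[L,L^2]\subseteq L^3$): its rank, as a scalar quadratic form after choosing $L_2\cong\mathbb C$, is preserved by isomorphisms because any isomorphism respects $L_1$ and $L_2$ and carries $q$ to $q$, and the values $\rank q=2,1,0$ will produce $F_n^1$, $F_n^2$, $F_n^3(\alpha)$ respectively. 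In each case, using that the zero locus of $q$ (when $q\ne 0$) is a union of at most two lines while the bad locus $\{y\in L_1: R_y|_{L_i}=0\text{ for some }i\}$ is a finite union of lines, one chooses $e_1,f$ compatibly and builds the cyclic chain $e_{i+1}:=[e_i,e_1]$ — starting at $e_1$ when $[e_1,e_1]\ne 0$, otherwise at a generator $e_2\in L_2$ — so that $[e_i,e_1]=e_{i+1}$ already holds.

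It then remains to determine the grading-admissible brackets not yet fixed: $[e_1,e_1],[e_1,f],[f,e_1],[f,f]\in L_2$, then $[e_1,e_i],[f,e_i],[e_i,f]\in L_{i+1}$ and $[e_i,e_j]\in L_{i+j}$ for $i,j\ge 2$ (automatically $0$ when $i+j>n-1$), each a single scalar. Feeding the Leibniz identity $[x,[y,z]]=[[x,y],z]-[[x,z],y]$ into triples of basis vectors, together with $[x,x]\in\Ann_r(L)$ and $[x,y]+[y,x]\in\Ann_r(L)$, yields recursions among these scalars; spending the residual grading-preserving freedom (keep $e_1$, rescale $e_i\mapsto\mu e_i$, replace $f$ by $\nu f$ plus a correction in $\bigoplus_{j\ge2}L_j$) forces the structure constants into exactly one of the three normal forms. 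In the case $\rank q=0$ one moreover checks that $L$ is a Lie algebra, that the only off-chain brackets that may survive are the antisymmetric pairings $[e_i,e_{n+1-i}]=\alpha(-1)^{i+1}e_n$, and — via the Jacobi identity on triples such as $(e_2,e_{n-1},e_1)$ and a rescaling of $e_n$ — that $\alpha$ normalizes to $0$ or $1$ with the stated parity restriction on $n$. I expect this case analysis to be the main obstacle: one must show that outside the Lie case every admissible bracket other than the chain $[e_i,e_1]=e_{i+1}$ is annihilated — forced to $0$ by the Leibniz identity or removed by the residual basis change — that no continuous parameter survives for $F_n^1,F_n^2$, and that the pairing term cannot be eliminated when $\alpha=1$.

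Finally, pairwise non-isomorphism holds because $\rank q$ separates $F_n^1,F_n^2,F_n^3(\alpha)$ (values $2,1,0$), while $F_n^3(0)\not\cong F_n^3(1)$, in the cases where both occur, is classical (Vergne), detected for instance by standard characteristic invariants such as the center of the derived subalgebra.
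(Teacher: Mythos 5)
First, note that the paper does not prove this theorem: it is quoted as known, with the non-Lie algebras $F_n^1$, $F_n^2$ coming from \cite{AyOm2} and the Lie case $F_n^3(\alpha)$ from Vergne \cite{Ver}. So your argument cannot be matched against one in the text and has to stand on its own.

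As a strategy it is the right one, and your invariant --- the quadratic map $q(y)=[y,y]$ from $L_1=L/L^2$ to $L_2=L^2/L^3$, whose rank $2,1,0$ separates the three families --- is well defined (since $[L,L^2]+[L^2,L]\subseteq L^3$) and does dispose of most of the non-isomorphism half. But the existence half has a genuine gap: the entire content of the theorem is the claim that the Leibniz identity, together with the residual graded base changes, forces \emph{all} structure constants other than the chain $[e_i,e_1]=e_{i+1}$ (and, in the Lie case, the pairing $[e_i,e_{n+1-i}]$) to vanish, and you do not carry out this computation --- you explicitly flag it as ``the main obstacle''. Two concrete points would also need repair before that computation can start. (i) In the rank-$2$ case you want $e_1$ simultaneously null for $q$ and such that every $R_{e_1}|_{L_i}$ is an isomorphism; since the null locus of a rank-$2$ form is exactly two lines, the counting argument ``finite union of lines'' does not by itself exclude that both null lines lie in the bad locus, so this compatibility must itself be extracted from the Leibniz identity. (ii) The second generator is not ``a generator $e_2\in L_2$'': in $F_n^1$ and $F_n^3$ the element $e_2$ lies in $L_1$ and $e_i\in L_{i-1}$ for $i\ge 2$, while in $F_n^2$ the complementary degree-one generator is the element $e_n$, which never occurs in any product; getting this bookkeeping right matters because the list of admissible brackets you must annihilate depends on it. As written, the proposal is a plausible plan with the decisive verification missing.
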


The following theorems describe solvable Leibniz algebras with nilradical $F_n^1$.

\begin{teo}[\cite{CLOK1}] \label{thR_2} An arbitrary $(n+1)$-dimensional solvable Leibniz algebra with nilradical $F_n^1$ is isomorphic to one of the following pairwise non-isomorphic algebras:
\[ R_1, \quad  R_2(\alpha), \quad  R_3, \quad  R_4, \quad R_5(\alpha_4,\dots,\alpha_{n-1}), \quad R_6(\alpha_4,\dots,\alpha_{n-1}), \quad R_7(\alpha_4,\dots,\alpha_{n-1}) .\]
\end{teo}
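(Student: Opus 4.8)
The plan is to run the Leibniz-algebra version of Mubarakzjanov's method, as developed in \cite{CLOK2}, with nilradical $N=F_n^1$. Since $\dim R=n+1$ and $\dim N=n$, a complementary subspace $Q$ to $N$ is one-dimensional, say $Q=\langle x\rangle$, so $R=N\dotplus Q$ as vector spaces and $R/N$ is one-dimensional, hence abelian; thus $[x,x]\in N$. Because $N$ is an ideal, the Leibniz identity says precisely that each right multiplication $R_y\colon z\mapsto[z,y]$ is a derivation of $R$, and it maps $N$ into $N$, so $R_x|_N\in\Der(N)$. Maximality of the nilradical forces $R_x|_N$ to be non-nilpotent, since otherwise the ideal spanned by $N$ and $x$ would itself be nilpotent, contradicting that $N$ is the nilradical. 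Hence the first real task is to describe $\Der(F_n^1)$ and, inside it, the non-nilpotent derivations up to the action of $\mathrm{Aut}(F_n^1)$ and nonzero rescaling of $x$.

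Next I would compute $\Der(F_n^1)$ directly from the table $[e_i,e_1]=e_{i+1}$, $2\le i\le n-1$. Writing $d(e_j)=\sum_k a_{jk}e_k$ and imposing $d([e_i,e_1])=[d(e_i),e_1]+[e_i,d(e_1)]$ gives a triangular linear system whose solution set is a small-dimensional family of matrices; the diagonalizable part is two-dimensional, spanned by the grading derivation $e_1\mapsto e_1,\ e_i\mapsto(i-1)e_i$ and by $e_1\mapsto 0,\ e_i\mapsto e_i$, while everything off the diagonal contributes only nilpotent pieces. A non-nilpotent derivation therefore has nonzero semisimple part, and after conjugating by an automorphism of $F_n^1$ (to diagonalize that part) and rescaling $x$, the operator $R_x|_N$ can be brought to a short list of normal forms, indexed by the ratio of the two diagonal parameters together with which off-diagonal nilpotent entries still survive. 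This branching is exactly what produces the seven families, the parametrized ones $R_5,R_6,R_7$ arising from off-diagonal entries $(\alpha_4,\dots,\alpha_{n-1})$ that cannot be scaled away in the degenerate branches.

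For each normal form of $R_x|_N$ I would then write the unknown brackets: the values $[e_i,x]$ are read off from $R_x|_N$, and $[x,e_i]$ and $[x,x]=\sum_i c_ie_i$ are left as unknowns. Imposing the full Leibniz identity on all triples containing $x$ — e.g.\ $\big[x,[e_i,e_1]\big]=\big[[x,e_i],e_1\big]-\big[[x,e_1],e_i\big]$ and the triples built from $[x,x]$ — together with the structural facts that $[x,x]$ and every $[y,x]+[x,y]$ lie in $\Ann_r(R)$, pins down all remaining structure constants up to finitely many parameters. The residual automorphisms of $R$ fixing the chosen normal form are then used to normalize or eliminate these parameters, yielding the explicit algebras $R_1,\dots,R_7$. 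Finally, pairwise non-isomorphism is settled by invariants: the dimensions of the terms of the lower central and derived series, the dimension of the right annihilator, the eigenvalue pattern (characteristic polynomial) of $R_x|_N$, and whether $[x,x]=0$.

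The main obstacle is the organized case analysis in the last two steps. Unlike the Lie setting, left and right multiplications in $F_n^1$ are strongly asymmetric ($e_1$ sits in the left annihilator of $N$), the square $[x,x]$ may genuinely be nonzero, and in the degenerate branches an entire vector $(\alpha_4,\dots,\alpha_{n-1})$ of parameters can persist after all admissible automorphisms are applied. Keeping the normalizations consistent across branches, verifying that each of the seven candidates actually satisfies the Leibniz identity for every $n$, and proving that they are mutually non-isomorphic, is the technical heart of the argument.
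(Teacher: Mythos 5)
This statement is not proved in the paper at all: it is quoted verbatim from \cite{CLOK1} (Theorem 4.2 there), and the present paper only reuses fragments of that argument later, in the proof of its own $(n+2)$-dimensional theorem. Your outline is the same Mubarakzjanov-type strategy that \cite{CLOK1} and \cite{CLOK2} follow, and its ingredients are consistent with what the paper does reproduce: the right multiplication $R_x|_N$ being a non-nilpotent derivation of $F_n^1$, the description of $\Der(F_n^1)$ from \cite[Proposition 4.1]{CLOK1}, and the two-dimensional maximal torus you identify (the grading derivation $e_i\mapsto(i-1)e_i$ together with $e_1\mapsto 0$, $e_i\mapsto e_i$) is exactly what underlies the normalization $\alpha_1=\beta_2=1$, $\alpha_2=\beta_1=0$ used in Section~3. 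Your justification that $R_x|_N$ cannot be nilpotent should be routed through the nil-independence criterion and the Engel-type theorem for Leibniz algebras from \cite{CLOK2}, since ``the ideal spanned by $N$ and $x$'' is all of $R$ and one needs that nilpotency of all right multiplications implies nilpotency of $R$.

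The genuine shortcoming is that, for a classification theorem, the case analysis you defer \emph{is} the theorem: nothing in your text actually produces the seven families $R_1,\dots,R_7$, determines which resonances of the eigenvalue ratio leave the off-diagonal parameters $(\alpha_4,\dots,\alpha_{n-1})$ unremovable, or verifies pairwise non-isomorphism. As written this is a correct and well-aimed roadmap rather than a proof; to make it a proof you would have to carry out the normal-form computation for $R_x|_N$, solve the Leibniz constraints on $[x,e_i]$ and $[x,x]$ in each branch, and exhibit the distinguishing invariants, i.e.\ essentially reconstruct the argument of \cite{CLOK1}.
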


\begin{teo}[\cite{CLOK1}] \label{thR_3} There does not exist any $(n+2)$-dimensional solvable Leibniz algebra with nilradical $F_n^1$.
\end{teo}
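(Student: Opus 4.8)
The plan is to run the standard Mubarakzjanov-type argument for solvable algebras with a prescribed nilradical, in the Leibniz adaptation of \cite{CLOK2}. Suppose, for contradiction, that such an algebra $L$ exists with $\dim L=n+2$ and nilradical $N=F_n^1$. Since $N$ is an ideal of codimension $2$, choose a complementary subspace $Q=\spann\{x,y\}$, so that $L=N\oplus Q$ as vector spaces. Using the right Leibniz identity one checks that for $z\in Q$ the right multiplication $R_z\colon N\to N$, $R_z(a)=[a,z]$, is a derivation of $N$. Moreover the solvability of $L$ together with the maximality of the nilradical forces the pair $R_x,R_y$ to be nil-independent: if some nontrivial combination $\alpha R_x+\beta R_y$ were nilpotent, then $N\oplus\spann\{\alpha x+\beta y\}$ would be a nilpotent ideal strictly larger than $N$, a contradiction. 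Thus it suffices to prove that $F_n^1$ does not admit two nil-independent derivations.

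First I would compute $\Der(F_n^1)$ explicitly. Writing a candidate derivation $d$ as a matrix $(d_{ij})$ in the basis $e_1,\dots,e_n$ and imposing the derivation condition $d([e_i,e_1])=[d(e_i),e_1]+[e_i,d(e_1)]$ on the defining relations $[e_i,e_1]=e_{i+1}$ for $2\le i\le n-1$, one obtains a linear system on the entries $d_{ij}$. Solving this system over the whole index range yields a closed description of $\Der(F_n^1)$, together with its nilpotent ideal consisting of the inner derivations and the nilpotent outer ones.

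The decisive step is then to read off from this description the maximal number of nil-independent derivations of $F_n^1$. The whole theorem reduces to showing that this number equals $1$, i.e.\ that every derivation is, modulo nilpotent derivations, a scalar multiple of one fixed diagonalizable derivation. Granting this, any two derivations $R_x,R_y$ would admit a nontrivial combination whose semisimple part vanishes and which is therefore nilpotent, contradicting nil-independence; hence $\dim Q\le 1$, and no $(n+2)$-dimensional extension could occur.

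The main obstacle is precisely this last count, and it is the step I would scrutinize most carefully. The diagonal constraints produced by the derivation equations must be solved consistently across all indices $2\le i\le n-1$, and the way the free diagonal parameters couple to the off-diagonal nilpotent freedom is delicate; a miscount of the independent diagonal directions is exactly the kind of slip that would invalidate the conclusion. I would therefore isolate the semisimple part of a generic derivation, determine its rank and the rigidity of its spectrum against the full set of constraints, and only then decide whether one or more nil-independent derivations genuinely survive before committing to the non-existence claim.
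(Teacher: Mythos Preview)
The statement you are trying to prove is \emph{false}, and the paper does not prove it: immediately after quoting this theorem from \cite{CLOK1}, the authors write ``Nevertheless, the last theorem is not true,'' and the main content of the paper is precisely to correct this error by exhibiting an explicit $(n+2)$-dimensional solvable Leibniz algebra $R(F_n^1)$ with nilradical $F_n^1$.

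Your proposed argument is the right template for results of this shape, and you even flag the exact place where it breaks: the claim that $F_n^1$ admits at most one nil-independent derivation is wrong. In fact $F_n^1$ admits two, for instance the diagonal derivations
\[
D_x(e_1)=e_1,\quad D_x(e_i)=(i-1)e_i\ (2\le i\le n),\qquad
D_y(e_1)=0,\quad D_y(e_i)=e_i\ (2\le i\le n),
\]
and any nontrivial combination $\alpha D_x+\beta D_y$ has eigenvalue $\alpha$ on $e_1$ and $\alpha+\beta$ on $e_2$, hence is never nilpotent. These are exactly the outer derivations that the paper uses to build $R(F_n^1)$. So the ``decisive step'' you isolate---reducing the semisimple part of $\Der(F_n^1)$ to a single direction---cannot be carried out, and the non-existence conclusion does not follow. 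Your instinct to scrutinize the diagonal count was correct; this is precisely the miscount that \cite{CLOK1} made and that the present paper repairs.
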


Nevertheless, the last theorem is not true. We will find only one $(n+2)$-dimensional solvable algebra Leibniz with nilradical $F_n^1$.

\subsection{The second cohomology group of a Leibniz algebra.}

For acquaintance with the definition of cohomology group of Leibniz algebras and its applications
to the description of the variety of Leibniz algebras (similar to the Lie algebras case) we refer the reader
to the papers \cite{Bal,Ger,GoKh,Lod,LoPi,NiRi}.
Here we just recall that the second cohomology group of a Leibniz algebra $L$ with coefficients in a corepresentation $M$
is the quotient space
\[HL^2(L, M) = ZL^2(L, M)/BL^2(L, M),\]
where the 2-cocycles $\varphi\in ZL^2(L, M)$ and the 2-coboundaries $f\in BL^2(L, M)$
are defined as follows

\begin{equation}\label{E.Z2}
(d^2\varphi)(a,b,c)=[a,\varphi(b,c)]-[\varphi(a,b),c]+[\varphi(a,c),b]+\varphi(a,[b,c])-
\varphi([a,b],c)+\varphi([a,c],b)=0
\end{equation}
and
\[ f(a,b)=[d(a),b]+[a,d(b)]-d([a,b]) \ \ \text{for some linear map} \ d. \]

\section{(n+2)-dimensional solvable Leibniz algebra with naturally graded \\ filiform  nilradical $F_n^1$ and its rigidity} \label{sec:main}

\subsection{Solvable Leibniz algebras with naturally graded filiform nilradical $F_n^1$}

As mentioned above, in the work \cite{CLOK1} it was made a mistake. Therefore, we consider a solvable Leibniz algebra whose dimension is equal to $n+2$
 and the nilradical is the filiform Leibniz algebra $F_n^1$. We give a complete proof of the following theorem and specify the exact place of the error in Theorem \ref{thR_3}.

\begin{teo} An arbitrary $(n+2)$-dimensional solvable Leibniz algebra with nilradical $F_n^1$ is isomorphic to the algebra $R(F_n^1)$ with the multiplication table:
\[\begin{array}{llll}
[e_i,e_1]=e_{i+1}, &  2\leq i \leq {n-1},  & [e_1,x]=e_1,      &                  \\[1mm]
[e_i,y]=e_i,       &  2\leq i\leq n,       & [e_i,x]=(i-1)e_i, &  2\leq i \leq n, \\[1mm]
                   &                       & [x,e_1]=-e_1.     &                  \\[1mm]
\end{array}\]
\end{teo}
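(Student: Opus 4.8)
The plan is to follow the Mubarakzjanov-type extension technique adapted to Leibniz algebras. Let $L$ be a solvable Leibniz algebra of dimension $n+2$ with nilradical $N\cong F_n^1$. Since $\dim N=n$, fix a vector space complement $Q$ with $\dim Q=2$; for $q\in Q$ the right multiplication $R_q\colon z\mapsto[z,q]$ sends $N$ into $N$ and restricts to a derivation of $N$ (a direct consequence of the Leibniz identity), and since $[L,L]$ is nilpotent it lies in $N$, so every product other than $[e_i,e_1]=e_{i+1}$ takes values in $N$. A standard argument (cf.\ \cite{CLOK2}) shows that a basis $\{x,y\}$ of $Q$ may be chosen with $R_x|_N$ and $R_y|_N$ nil-independent derivations of $N$, and that $\dim Q$ cannot exceed the maximal number of such; hence the first step is to compute $\Der(F_n^1)$.

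Since $F_n^1$ is generated by $e_1$ and $e_2$, a derivation $d$ is determined by $d(e_1)$ and $d(e_2)$; imposing $d([u,v])=[d(u),v]+[u,d(v)]$ on the defining brackets forces $d(e_1)=a_1e_1+a_ne_n$, $d(e_2)=\sum_{k=2}^{n}b_ke_k$, with the $d(e_i)$, $i\ge3$, recursively determined and $d(e_i)=\big((i-2)a_1+b_2\big)e_i+(\text{higher-index terms})$ for $2\le i\le n$. Thus the nilpotent derivations (those with $a_1=b_2=0$) form a subspace of codimension $2$, which confirms that $\dim Q=2$ is admissible, and the pair $(a_1,b_2)$ records the semisimple part of $d$. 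Replacing $\{x,y\}$ by a suitable basis of its span (legitimate because $(a_1^x,b_2^x)$ and $(a_1^y,b_2^y)$ are linearly independent) and performing a change of basis of $N$ preserving $[e_i,e_1]=e_{i+1}$, I would bring $R_x|_N$ to the diagonal form $[e_1,x]=e_1$, $[e_i,x]=(i-1)e_i$ $(2\le i\le n)$; then, using the Leibniz consequence $R_yR_x-R_xR_y=R_{[x,y]}$ together with $[x,y]\in N$, the nilpotent part of $R_y|_N$ is forced onto the single superdiagonal $e_i\mapsto e_{i+1}$, so a substitution $y\mapsto y-\beta e_1$ (note $R_{e_1}|_N\colon e_i\mapsto e_{i+1}$) makes $R_y|_N$ diagonal too, giving $[e_1,y]=0$, $[e_i,y]=e_i$ $(2\le i\le n)$.

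With the right actions normalized, the remaining products are pinned down by repeated use of the Leibniz identity. Identities such as $[y,[e_i,e_1]]=[[y,e_i],e_1]-[[y,e_1],e_i]$, $[y,[e_1,e_2]]=0$ and $[y,[y,e_2]]=[[y,y],e_2]-[[y,e_2],y]$ yield $[y,e_i]=0$ for all $i$ and, after a shift $y\mapsto y+z$ with $z\in N$, $[y,y]=0$ (in particular $[y,e_1]=0$). For $x$, $[x,[e_i,e_1]]=[[x,e_i],e_1]-[[x,e_1],e_i]$ and $[x,[e_1,e_2]]=0$ give $[x,e_i]=0$ for $i\ge3$, and $[e_2,[x,e_2]]=[[e_2,x],e_2]-[[e_2,e_2],x]$ gives $[x,e_2]=0$; since $[x,e_1]+[e_1,x]\in\Ann_r(L)$ and elements of $\Ann_r(L)\cap N$ have zero $e_1$-component, $[x,e_1]=-e_1+(\text{terms in }e_2,\dots,e_n)$, the identity $[x,[y,e_1]]=[[x,y],e_1]-[[x,e_1],y]$ kills the $e_2$-term, and a shift $x\mapsto x+z$ removes the rest while forcing $[x,x]=0$. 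Finally $[x,y],[y,x]\in\langle e_1,e_n\rangle$; identities like $[x,[x,y]]=-[[x,y],x]$ eliminate their $e_n$-components, $[x,y]+[y,x]\in\Ann_r(L)$ relates their $e_1$-components, and $[e_2,[x,y]]=[[e_2,x],y]-[[e_2,y],x]$ forces $[x,y]=0$, hence $[y,x]=0$. A routine check that the resulting table satisfies the Leibniz identity then gives $L\cong R(F_n^1)$.

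The real difficulty is the bookkeeping across the two reduction stages: one must work through a sizeable list of Leibniz identities while tracking exactly which changes of basis of $N$ and which modifications of $x,y$ by elements of $N$ remain available, and verify that every off-diagonal or lower-order coefficient is either forced to vanish or can be absorbed — in particular the nonzero products $[e_1,x]=e_1$ and $[x,e_1]=-e_1$ are genuinely present and must not be dropped. This is precisely where the argument of \cite{CLOK1} breaks down: there one concludes that the corresponding system of equations is inconsistent, whereas a correct treatment produces exactly the single algebra $R(F_n^1)$; I would indicate the precise identity in that derivation at which the spurious contradiction was drawn.
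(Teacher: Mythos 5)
Your proposal reaches the correct algebra and its overall strategy is sound, but it follows a genuinely different route from the paper's. The paper does not recompute $\Der(F_n^1)$ or renormalize from scratch: it imports the derivation form from \cite{CLOK1} and, crucially, the case analysis of Case 1 of Theorem 4.2 of \cite{CLOK1}, which splits on the coefficient $\lambda_2$ of $e_2$ in $[x,e_1]$ into the subcases $[x,e_1]=-e_1-e_2$ and $[x,e_1]=-e_1$; the first is killed by the single identity $[x,[e_1,y]]=[[x,e_1],y]-[[x,y],e_1]$ (whose $e_2$-component cannot vanish), and the second --- the subcase that \cite{CLOK1} omitted, which is the precise location of the error being corrected --- is then worked out to give $R(F_n^1)$. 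Your normalization avoids the case split altogether: you diagonalize $R_x|_N$ first, use $R_{[x,y]}=R_yR_x-R_xR_y$ together with $[x,y]\in N$ to push the nilpotent part of $R_y|_N$ onto the superdiagonal and absorb it by $y\mapsto y-\beta e_1$, and only afterwards pin down $[x,e_1]$, so the $e_2$-coefficient of $[x,e_1]$ is killed outright by $[x,[y,e_1]]=[[x,y],e_1]-[[x,e_1],y]$ rather than surviving into a separate dead branch. This is self-contained (the reader need not reconstruct the earlier classification) at the price of redoing the derivation computation; the paper's version is shorter only because it leans on \cite{CLOK1}. The key claims of your reduction ($d(e_1)=a_1e_1+a_ne_n$; nilpotency of a derivation of $F_n^1$ being equivalent to $a_1=b_2=0$, so that nil-independence gives the linear independence you need; the commutator argument; and the $\Ann_r$ argument giving $[x,e_1]=-e_1+\cdots$) all check out.

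Two points to repair when writing this out in full. First, several of your identity attributions do not deliver what you claim on their own: for instance $[e_2,[x,e_2]]=[[e_2,x],e_2]-[[e_2,e_2],x]$ only yields that the $e_1$-component of $[x,e_2]$ vanishes, and you need $[x,[e_1,e_2]]=0$ to kill the components in $e_2,\dots,e_{n-1}$ and then the triple $(x,e_2,x)$ to remove the $e_n$-component; similarly, obtaining $[y,e_i]=0$ genuinely requires the triple $(y,e_1,e_2)$ to reduce $[y,e_2]$ to $\langle e_1,e_n\rangle$ before the triples $(y,e_2,y)$ and $(y,e_2,x)$ finish --- without it one is left with the spurious branch $[y,e_i]=-e_i$, $2\le i\le n$, which must be excluded by a further identity such as $[y,[x,e_2]]=[[y,x],e_2]-[[y,e_2],x]$. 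Second, your description of where \cite{CLOK1} goes wrong is slightly off: the flaw there is not a spurious inconsistency derived from one identity but the omission of the subcase $\lambda_2=1$; in the complementary subcase the contradiction is genuine.
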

\begin{proof} From the conditions of the theorem we have the existence of a basis $\{e_1, e_2, \dots, e_n, x, y\}$ such that the multiplication  table of $F_n^1$ remains the same.
 The outer non-nilpotent derivations of $F_n^1$, denoted by $D_x$ and $D_y$, are of the form given in  \cite[Proposition 4.1]{CLOK1},
  with the set of entries $\{\alpha_i, \gamma\}$ and $\{\beta_i,\delta\}$, respectively, where $[e_i,x]=D_x(e_i)$ and $[e_i,y]=D_y(e_i)$.

By taking the following change of basis:
\[x'=\frac{\beta_2}{\alpha_1\beta_2-\alpha_2\beta_1}x-
\frac{\alpha_2}{\alpha_1\beta_2-\alpha_2\beta_1}y,\quad \quad y'=-\frac{\beta_1}{\alpha_1\beta_2-\alpha_2\beta_1}x+
\frac{\alpha_1}{\alpha_1\beta_2-\alpha_2\beta_1}y,\]
we may assume that $\alpha_1=\beta_2=1$ and $\alpha_2=\beta_1=0$.

Therefore, we have the products
\[\begin{array}{ll}
[e_1,x]=e_1+\sum\limits_{i=3}^n\alpha_ie_i,                & [e_2,x]=e_2+\sum\limits_{i=3}^{n-1}\alpha_ie_i+\gamma e_n, \\[1mm]
[e_i,x]=(i-1)e_i+\sum\limits_{j=i+1}^n\alpha_{j-i+2}e_j,   & 3\leq i\leq n,\\[1mm]
[e_1,y]=e_2+\sum\limits_{i=3}^n\beta_ie_i,                 & [e_2,y]=e_2+\sum\limits_{i=3}^{n-1}\beta_ie_i+\delta e_n,\\[1mm]
[e_i,y]=e_i+\sum\limits_{j=i+1}^n\beta_{j-i+2}e_j,         & 3\leq i\leq n.\\[1mm]
\end{array}\]

Let us introduce the  notations

\[[x,e_1]=\sum\limits_{i=1}^n\lambda_ie_i, \qquad [x,e_2]=\sum\limits_{i=1}^n\delta_ie_i, \qquad [x,x]=\sum\limits_{i=1}^n\mu_ie_i.\]

From the  Leibniz identity we get $[x,e_i]=0, \ 3\leq i\leq n$.

By applying similar arguments as in Case 1 of the proof of Theorem \ref{thR_2} (see \cite[Theorem 4.2]{CLOK1}) and taking into account that the products $[e_i, y], \ 1\leq i\leq n$,
 will not be changed under the transformations of the bases that were used there, we obtain two cases of products of elements:

\[\text{Case} \ \lambda_2\neq 1:
\left\{\begin{array}{ll}
[e_i,e_1]=e_{i+1},  & \  2\leq i \leq {n-1},\\[1mm]
[e_1,x]=e_1,        &  \\[1mm]
[e_i,x]=(i-1)e_i,   & \  2\leq i \leq n,\\[1mm]
[x,e_1]=-e_1-e_2,   &  \\[1mm]
[e_1,y]=\beta e_n, &  \\[1mm]
[e_i,y]=e_i+\sum\limits_{j=i+1}^n\beta_{j-i+2}e_j,         & 2\leq i\leq n;
\end{array}\right. \]

\[\text{Case} \ \lambda_2= 1:
\left\{\begin{array}{ll}
[e_i,e_1]=e_{i+1},  & \  2\leq i \leq {n-1},\\[1mm]
[e_1,x]=e_1,        &  \\[1mm]
[e_i,x]=(i-1)e_i,   & \  2\leq i \leq n,\\[1mm]
[x,e_1]=-e_1,       &  \\[1mm]
[e_1,y]=\delta e_n, &  \\[1mm]
[e_i,y]=e_i+\sum\limits_{j=i+1}^n\beta_{j-i+2}e_j,         & 2\leq i\leq n.
\end{array}\right. \]

That omission of Case $\lambda_2=1$ in the proof of Theorem \ref{thR_3} is an error in the work \cite{CLOK1}.
We fill this gap in the proof.

Let us introduce notations

\[[y,e_1]=\sum\limits_{i=1}^n\eta_ie_i, \quad [y,e_2]=\sum\limits_{i=1}^n\theta_ie_i, \quad [y,y]=\sum\limits_{i=1}^n\tau_ie_i, \quad [x,y]=\sum\limits_{i=1}^n\sigma_ie_i, \quad [y,x]=\sum\limits_{i=1}^n\rho_ie_i.\]

Consider the \textbf{Case $\lambda_2\neq 1$.} From the Leibniz identity
\[[x,[e_1,y]]=[[x,e_1],y]-[[x,y],e_1], \] we get
\[0=-e_2-\sum\limits_{i=3}^n\beta_ie_i-\sum\limits_{i=3}^{n}\sigma_{i-1}e_i-\beta e_n.\]

Thus, we have a contradiction with the assumption of the existence of an algebra under the conditions of the theorem.

Now we consider the \textbf{Case $\lambda_2= 1$.}

By taking the following change of basis elements $y'=y+\sigma_1 e_1$, we derive $[x,y]=\sum\limits_{i=2}^n\sigma_ie_i$.

From the Leibniz identity $0=[e_2,[x,y]]=[[e_2,x],y]-[[e_2,y],x]$, we deduce $\beta_i=0, \ 3\leq i\leq n$.

By making the change $y'=y+\sum\limits_{i=2}^{n-1}\eta_{i+1}e_i$, we obtain $[y, e_1]=\eta_1e_1+\eta_2e_2$ and
\[\begin{array}{lll}
[e_i,e_1]=e_{i+1}, \  2\leq i \leq {n-1},  & [e_1,y]=\delta e_n,                    & [y,e_i]=\sum\limits_{j=1}^n\mu_{i,j}e_j, \ 3\leq i\leq n,\\[1mm]
[e_1,x]=e_1,                               & [e_i,y]=e_i, \ 2\leq i\leq n,          & [y,x]=\sum\limits_{i=1}^n\rho_ie_i,\\[1mm]
[e_i,x]=(i-1)e_i,  \  2\leq i \leq n,      & [y,e_1]=\eta_1e_1+\eta_2e_2,           & [y,y]=\sum\limits_{i=1}^n\tau_ie_i,\\[1mm]
[x,e_1]=-e_1,                              & [y,e_2]=\sum\limits_{i=1}^n\theta_ie_i,& [x,y]=\sum\limits_{i=2}^n\sigma_ie_i. \\[1mm]
\end{array}\]

By applying the Leibniz identity to the products above we derive

\begin{align*}
  \delta & =\eta_1=\eta_2=\theta_i=\mu_{i,j}=\sigma_i=0 \qquad (1\leq i, j\leq n), \\
  \rho_i & =\tau_i=0 \quad (1\leq i\leq n-1), \qquad \rho_n=(n-1)\tau_n.
\end{align*}

Finally, by applying the change of basis elements $y'=y-\tau_ne_n$, we obtain the algebra $R(F_n^1)$.
\end{proof}

\subsection{Rigidity of the algebra $R(F_n^1)$.}

In order to describe the second group of cohomology of the algebra $R(F_n^1)$ we need the description of its derivations. The general matrix form of the derivations of $R(F_n^1)$ is given in the following proposition.

\begin{pr} \label{pr1} Any derivation of the algebra $R(F_n^1)$ has the following matrix form:
\[\left( \begin{array}{ccccccccc}
\alpha_1 & 0       & 0                & 0                 &\dots     & 0                     & 0                 & 0     &0\\
0        & \beta_2 & \beta_3          & 0                 &\dots     & 0                     & 0                 & 0     &0\\
0        & 0       & \alpha_1+\beta_2 & \beta_3           &\dots     & 0                     & 0                 & 0     &0\\
0        & 0       & 0                & 2\alpha_1+\beta_2 &\dots     & 0                     & 0                 & 0     &0\\
\vdots   &\vdots   & \vdots           & \vdots            &\ddots    &\vdots                 &\vdots        &\vdots &\vdots\\
0        & 0       & 0                & 0                 &\dots     & (n-3)\alpha_1+\beta_2 & \beta_3           & 0     &0\\
0        & 0       & 0                & 0                 &\dots     & 0                     & (n-2)\alpha_1+\beta_2 &0  &0\\
-\beta_3 & 0       & 0                & 0                 &\dots     & 0                     & 0                 & 0     &0\\
0        & 0       & 0                & 0                 &\dots     & 0                     & 0                 & 0     &0\\
\end{array}\right).\]
\end{pr}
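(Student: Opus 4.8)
The plan is to take an arbitrary derivation $d$ of $R(F_n^1)$, write it as a generic linear operator on the basis $\{e_1,\dots,e_n,x,y\}$, and successively determine its matrix entries by substituting the defining relations of $R(F_n^1)$ into the derivation identity $d([u,v])=[d(u),v]+[u,d(v)]$.

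First I would cut down the number of unknowns by two structural observations. Every product in $R(F_n^1)$ lands in $N:=\langle e_1,\dots,e_n\rangle$, while each basis vector $e_i$ is itself a product (e.g.\ $e_1=-[x,e_1]$, $e_2=[e_2,y]$, and $e_{i+1}=[e_i,e_1]$ for $2\le i\le n-1$); hence $[R(F_n^1),R(F_n^1)]=N$, which is therefore characteristic, so $d(N)\subseteq N$ and $d(e_i)\in N$ for $1\le i\le n$. Moreover a direct check gives $\Ann_r(R(F_n^1))=\langle e_2,\dots,e_n\rangle$, and since for $z\in\Ann_r$ one has $[w,d(z)]=d([w,z])-[d(w),z]=0$ for every $w$, the right annihilator is characteristic too; thus $d(e_i)\in\langle e_2,\dots,e_n\rangle$ for $2\le i\le n$. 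After this, the surviving unknowns are the $N$-coefficients of $d(e_1)$ and $d(e_2)$, those of $d(e_k)$ for $k\ge 3$, the nilradical part and the $x,y$-components of $d(x)$, and the analogous data for $d(y)$ (nothing forces $d(x),d(y)\in N$ a priori, since $x,y\notin N$).

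I would then process the relations in a convenient order. The pair $[e_1,x]=e_1$ and $[x,e_1]=-e_1$ forces $d(e_1)=\alpha_1 e_1$ and collapses the nilradical part of $d(x)$ to a multiple of $e_1$; the relations $[x,x]=0$, $[y,x]=0$, $[e_1,y]=0$ and $[e_2,y]=e_2$ then annihilate the remaining components of $d(y)$ (yielding $d(y)=0$) and of $d(x)$ outside $\langle e_1\rangle$; and $[e_2,x]=e_2$ pins down $d(e_2)=\beta_2 e_2+\beta_3 e_3$ together with $d(x)=-\beta_3 e_1$. Finally, feeding the chain $e_{i+1}=[e_i,e_1]$ into the identity yields the recursion $d(e_{i+1})=[d(e_i),e_1]+\alpha_1 e_{i+1}$, which by induction produces $d(e_k)=((k-2)\alpha_1+\beta_2)e_k+\beta_3 e_{k+1}$ for $2\le k\le n-1$ and $d(e_n)=((n-2)\alpha_1+\beta_2)e_n$. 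One then checks that the leftover relations---those coming from $[e_i,x]$, $[e_i,y]$ with $3\le i\le n$ and all the vanishing brackets---impose no further condition, and reading off the coefficients of $d$ gives exactly the asserted matrix, with free parameters $\alpha_1,\beta_2,\beta_3$.

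I expect the only genuine difficulty to be bookkeeping: there is no conceptual hurdle, only a sizeable linear system, and---because $n$ is a parameter---the step $e_{i+1}=[e_i,e_1]$ must be handled as an induction rather than a finite case check, with special attention at the top of the chain where $[e_n,e_1]=0$ strips the $e_{n+1}$-term from $d(e_n)$. The other point requiring care is to use \emph{all} relations involving $x$ and $y$: it is precisely $[x,e_1]=-e_1$ that completes the rigidification of $d(e_1)$ and $[e_2,y]=e_2$ that removes the last component of $d(y)$.
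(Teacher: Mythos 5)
Your proposal is correct and follows essentially the same route as the paper, whose proof consists of the single remark that it is a straightforward verification of the derivation property: your two invariance observations ($d$ preserves $[R(F_n^1),R(F_n^1)]=\langle e_1,\dots,e_n\rangle$ and $\Ann_r(R(F_n^1))=\langle e_2,\dots,e_n\rangle$), the elimination of coefficients via the listed relations, and the induction along $e_{i+1}=[e_i,e_1]$ with the terminal case $[e_n,e_1]=0$ all check out and reproduce the stated matrix. The only trifling imprecision is that the $y$-component of $d(x)$ is actually killed by $[e_2,x]=e_2$ rather than by the batch $[x,x]=[y,x]=[e_1,y]=0$, but since you invoke that relation in the very next step the argument closes.
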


\begin{proof}
The proof is carried out by straightforward calculations of the derivation property.
\end{proof}

From Proposition \ref{pr1} we conclude that $\dim  BL^2(R(F_n^1),R(F_n^1))=n^2+4n+1$. The general form of an element of the space $ZL^2(R(F_n^1),R(F_n^1))$ is presented below.

\begin{pr} \label{pr2}  An arbitrary element $\varphi$ of the space $ZL^2\big(R(F_n^1),R(F_n^1)\big)$ has the following form:
\[\left\{\begin{array}{ll}
\varphi(e_1,e_1) = \sum\limits_{k=3}^{n}a_{1,1}^ke_k,  &  \\[1mm]
\varphi(e_i,e_1) = \sum\limits_{k=1}^{n}a_{i,1}^ke_k+a_{i,1}^{n+1}x_1+a_{i,1}^{n+2}x_2,  & \ 2\leq i\leq n-1, \\[1mm]
\varphi(e_n,e_1) =a_{n-1,1}^{n+1}e_1-\sum\limits_{k=3}^{n-1}(\sum\limits_{j=1}^{k-2}a_{n-j,1}^{k-j})e_k+
\Big(\frac{n(n-1)}{2}a_{n+1,1}^{n+1}+(n-1)a_{n+1,1}^{n+2}-\sum\limits_{k=2}^{n-1}a_{k,1}^k\Big)e_n,  &  \\[1mm]
\varphi(e_1,e_2) =a_{1,2}^1e_1,  &  \\[1mm]
\varphi(e_i,e_2) = ((i-2)a_{1,2}^1+a_{2,2}^2)e_i+a_{2,2}^3e_{i+1}, & 2\leq i\leq n, \\[1mm]
\varphi(e_1,e_i) =-a_{i-1,1}^{n+1}e_1, & 3\leq i\leq n, \\[1mm]
\varphi(e_i,e_3) = -((i-1)a_{2,1}^{n+1}+a_{2,1}^{n+2})e_i-(a_{1,2}^1+a_{2,1}^1)e_{i+1}, & 2\leq i\leq n, \\[1mm]
\varphi(e_i,e_j) = -((i-1)a_{j-1,1}^{n+1}+a_{j-1,1}^{n+2})e_i+(a_{j-2,1}^{n+1}-a_{j-1,1}^1)e_{i+1},\ \ \ \ \ \ \ \ \ \ \ \ \ 4\leq j\leq n,& 2\leq i\leq n, \\[1mm]
\varphi(x_1,e_1)=a_{n+1,1}^1e_1+a_{1,1}^3e_2+\sum\limits_{k=3}^{n}a_{n+1,1}^ke_k+a_{n+1,1}^{n+1}x_1+a_{n+1,1}^{n+2}x_2,&\\[1mm]
\varphi(x_2,e_1)=a_{n+2,1}^1e_1+\sum\limits_{k=3}^{n}a_{n+2,1}^ke_k,&\\[1mm]
\varphi(x_1,e_2) =-a_{2,2}^3e_1,                                    & \\[1mm]
\varphi(x_1,e_3) =(a_{1,2}^1+a_{2,1}^1)e_1,                         & \\[1mm]
\varphi(x_1,e_i) =(a_{i-1,1}^1-a_{i-2,1}^{n+1})e_1,                 & 4\leq i\leq n, \\[1mm]
\varphi(e_1,x_1) =-a_{n+1,1}^1e_1+\sum\limits_{k=3}^{n-1}(k-2)a_{1,1}^{k+1}e_k+(n-2)a_{1,n+2}^ne_n
-a_{n+1,1}^{n+1}x_1-a_{n+1,1}^{n+2}x_2,  &  \\[1mm]
\varphi(e_2,x_1)=\sum\limits_{k=2}^na_{2,n+1}^ke_k-a_{1,2}^1x_1+(a_{1,2}^1-a_{2,2}^2)x_2,  &  \\[1mm]
\varphi(e_3,x_1)=(a_{1,2}^1+a_{2,1}^1)e_1+(a_{2,1}^2-a_{n+1,1}^{n+1}-a_{n+1,1}^{n+2})e_2+(a_{2,n+1}^2-a_{n+1,1}^1)e_3+&\\[1mm]
\ \ \ \ \ \ \ \ \ \ \ \ \ +\sum\limits_{k=4}^n(a_{2,n+1}^{k-1}-(k-3)a_{2,1}^k)e_k+2a_{2,1}^{n+1}x_1+2a_{2,1}^{n+2}x_2,&\\[1mm]
\varphi(e_i,x_1)=(i-2)(a_{i-1,1}^1-a_{i-2,1}^{n+1})e_1+\sum\limits_{k=2}^{i-2}(i-k)\sum\limits_{j=1}^{k-1}a_{i-j,1}^{k-j+1}e_k+&\\[1mm]
\ \ \ \ \ \ \ \ +\Big(\sum\limits_{k=2}^{i-1}a_{k,1}^k-\frac{(i-1)(i-2)}{2}a_{n+1,1}^{n+1}-(i-2)a_{n+1,1}^{n+2}\Big)e_{i-1}+\big(a_{2,n+1}^2-(i-2)a_{n+1,1}^1\big)e_i+&\\[1mm]
\ \ \ \ \ \ \ \ +\sum\limits_{k=i+1}^n\Big(a_{2,n+1}^{k-i+2}-(k-i)\sum\limits_{j=2}^{i-1}a_{j,1}^{k-i+j+1}\Big)e_k+(i-1)a_{i-1,1}^{n+1}x_1+(i-1)a_{i-1,1}^{n+2}x_2,  & 4\leq i\leq n, \\[1mm]
\varphi(e_1,x_2) =-a_{n+2,1}^1e_1+\sum\limits_{k=2}^{n-1}a_{1,1}^{k+1}e_k+a_{1,n+2}^ne_n, &  \\[1mm]
\varphi(e_2,x_2)=-a_{2,2}^3e_1+a_{2,n+2}^2e_2+a_{2,n+2}^3e_3-a_{1,2}^1x_1+(a_{1,2}^1-a_{2,2}^2)x_2,  &  \\[1mm]
\varphi(e_3,x_2)=(a_{1,2}^1+a_{2,1}^1)e_1+(a_{2,n+2}^2-a_{n+2,1}^1)e_3+a_{2,n+2}^3e_4+a_{2,1}^{n+1}x_1+a_{2,1}^{n+2}x_2,&\\[1mm]
\varphi(e_i,x_2)=(a_{i-1,1}^1-a_{i-2,1}^{n+1})e_1+(a_{2,n+2}^2-(i-2)a_{n+2,1}^1)e_i+a_{2,n+2}^3e_{i+1}+a_{i-1,1}^{n+1}x_1+a_{i-1,1}^{n+2}x_2,  &  4\leq i\leq n,\\[1mm]
\varphi(x_1,x_1)=\sum\limits_{k=2}^{n-2}(k-1)(a_{n+1,1}^{k+1}-a_{1,1}^{k+2})e_k+(n-2)(a_{n+1,1}^n-a_{1,n+2}^n)e_{n-1}+(n-1)a_{n+1,n+2}^ne_n,  &  \\[1mm]
\varphi(x_2,x_1)=a_{2,n+2}^3e_1+\sum\limits_{k=2}^{n-1}(k-1)a_{n+2,1}^{k+1}e_k+(n-1)a_{n+2,n+2}^ne_n,  &  \\[1mm]
\varphi(x_1,x_2)=-a_{2,n+2}^3e_1+\sum\limits_{k=2}^{n-2}(a_{n+1,1}^{k+1}-a_{1,1}^{k+2})e_k+(a_{n+1,1}^n-a_{1,n+2}^n)e_{n-1}+a_{n+1,n+2}^ne_n,  &  \\[1mm]
\varphi(x_2,x_2)=\sum\limits_{k=2}^{n-1}a_{n+2,1}^{k+1}e_k+a_{n+2,n+2}^ne_n. &
\end{array}\right.\]
\end{pr}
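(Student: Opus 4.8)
The plan is to compute $ZL^2\big(R(F_n^1),R(F_n^1)\big)$ directly from the cocycle equation. First I would write a generic bilinear map in coordinates with respect to the basis $\{e_1,\dots,e_n,x_1,x_2\}$, where we put $x_1:=x$, $x_2:=y$ and, for uniformity, $e_{n+1}:=x_1$, $e_{n+2}:=x_2$, so that $\varphi(e_p,e_q)=\sum_{r=1}^{n+2}a_{p,q}^r e_r$. Since $d^2\varphi$ is trilinear, the condition $\varphi\in ZL^2$ is equivalent to $(d^2\varphi)(a,b,c)=0$ for every triple $(a,b,c)$ of basis vectors, and each such triple, by \eqref{E.Z2}, produces one $R(F_n^1)$-valued linear relation among the coefficients $a_{p,q}^r$. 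Thus it remains to solve a large but completely explicit homogeneous linear system; the work lies in organizing it so that the closed-form solution displayed in the statement emerges (and so that the solution set is captured exactly, which gives both directions of the claim).

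The computation is organized as follows. I would use that the right multiplications $R_{x_1},R_{x_2}$ are commuting semisimple derivations of $R(F_n^1)$; since moreover the whole bracket of $R(F_n^1)$ is homogeneous for the resulting $\mathbb{Z}^2$-grading (with $\mathrm{wt}(e_1)=(1,0)$, $\mathrm{wt}(e_i)=(i-1,1)$ for $2\le i\le n$, $\mathrm{wt}(x_1)=\mathrm{wt}(x_2)=(0,0)$), the operator $d^2$ preserves the induced grading on $\Hom\big(R(F_n^1)\otimes R(F_n^1),R(F_n^1)\big)$, so $ZL^2$ splits into weight components and the system decouples. Within this framework I would process the cocycle identities in a fixed hierarchy: (1) triples $(e_i,e_j,e_k)$ inside the nilradical — beginning with $(e_i,e_1,e_1)$, then $(e_i,e_1,e_j)$, then the general case — which pin down $\varphi$ on $F_n^1\times F_n^1$ in terms of the surviving parameters $a_{1,1}^k$, $a_{i,1}^k$ $(2\le i\le n-1)$, $a_{1,2}^1$, $a_{2,2}^2$, $a_{2,2}^3$ and yield the shapes of $\varphi(e_n,e_1)$, $\varphi(e_i,e_2)$, $\varphi(e_1,e_i)$, $\varphi(e_i,e_j)$; (2) triples with exactly one argument in $\{x_1,x_2\}$, such as $(x_1,e_1,e_1)$, $(e_i,e_1,x_1)$, $(e_i,x_1,e_1)$ and their $x_2$-analogues, which express $\varphi(x_j,e_1)$, $\varphi(x_j,e_i)$, $\varphi(e_i,x_j)$ through the previous data together with the new parameters $a_{2,n+1}^k$, $a_{n+1,1}^k$, $a_{n+2,1}^k$, $a_{1,n+2}^n$; (3) triples with two arguments in $\{x_1,x_2\}$, which give $\varphi(x_i,x_j)$ and bring in the last parameters $a_{n+1,n+2}^n$, $a_{n+2,n+2}^n$. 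At each stage the relevant subsystem is overdetermined, and checking that the remaining triples are consequences of the relations already extracted is exactly what forces the precise coefficients — for instance the vanishing of the $\rho$- and $\tau$-type contributions and the exact $e_n$-coefficient of $\varphi(e_n,e_1)$.

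The main obstacle is bookkeeping rather than a new idea: the expressions for $\varphi(e_n,e_1)$ and, above all, for the family $\varphi(e_i,x_1)$ $(4\le i\le n)$ involve nested double sums, and showing that the candidate obtained from the low-index triples satisfies every higher-index cocycle identity requires an induction on $i$ that repeatedly telescopes those sums. I expect $\varphi(e_i,x_1)$ to be the hardest single piece: one reads off a recursion from $(e_i,e_1,x_1)$ and $(e_i,x_1,e_1)$ using the already-known values of $\varphi(e_j,e_1)$ and $\varphi(e_2,x_1)$, solves it, and then verifies consistency with all remaining triples, which is where the summation identities among the $a_{j,1}^k$ are needed. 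Once Proposition~\ref{pr2} is proved, counting the free parameters in the displayed form of $\varphi$ gives $\dim ZL^2\big(R(F_n^1),R(F_n^1)\big)$, and comparing it with $\dim BL^2\big(R(F_n^1),R(F_n^1)\big)=n^2+4n+1$ yields $HL^2\big(R(F_n^1),R(F_n^1)\big)=0$, hence the rigidity of $R(F_n^1)$ — though that conclusion is the content of the results following this proposition, not of the proposition itself.
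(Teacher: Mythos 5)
Your plan is essentially the paper's own proof: both reduce the proposition to the homogeneous linear system obtained by imposing $(d^2\varphi)(a,b,c)=0$ on all basis triples and then solve it by processing the triples in the same hierarchy (triples inside the nilradical, then triples involving one of $x_1,x_2$, then triples involving two). The one genuine methodological difference is the preliminary reduction: the paper first shrinks the possible targets of $\varphi$ by right-annihilator and ideal arguments ($\varphi(b,b)\in I$, $\varphi(I,I)\subseteq I$, $\varphi(x_2,I)=0$, $\varphi(x_1,I)\subseteq\langle e_1\rangle$, etc., each read off from a specific triple), whereas you decouple the system at the outset via the $\mathbb{Z}^2$-grading coming from the commuting semisimple right multiplications $R_{x_1},R_{x_2}$. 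That grading claim is correct: right multiplications are derivations by the Leibniz identity, they commute and are diagonal on the given basis, the bracket is homogeneous of weight zero, so $d^2$ preserves the induced weight decomposition of cochains; this kills in one stroke many coefficients that the paper eliminates one triple at a time, at the price of having to track which of the many nonzero weights actually contribute to the displayed answer. It does not, however, shortcut the hard part, which you correctly isolate: the nested sums in $\varphi(e_n,e_1)$ and $\varphi(e_i,x_1)$ still have to be produced by solving the recursion coming from $(e_i,e_1,x_1)$ and $(e_i,x_1,e_1)$ and checked against the remaining identities --- exactly the ``straightforward calculations'' the paper also leaves unshown. So I read your proposal as a correct outline of the same computation, organized slightly more systematically, rather than a different proof.
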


\begin{proof}
Let $\varphi\in ZL^2(R(F_n^1),R(F_n^1))$. We set
\begin{align*}
\varphi(e_i,e_j) & =\sum\limits_{k=1}^na_{i,j}^ke_k+a_{i,j}^{n+1}x_1+a_{i,j}^{n+2}x_2, && 1\leq i,j\leq n,\\
\varphi(e_i,x_j) & =\sum\limits_{k=1}^na_{i,n+j}^ke_k+a_{i,n+j}^{n+1}x_1+a_{i,n+j}^{n+2}x_2, && 1\leq i\leq n, \ 1\leq j\leq 2,\\
\varphi(x_j,e_i) & =\sum\limits_{k=1}^na_{n+j,i}^ke_k+a_{n+j,i}^{n+1}x_1+a_{n+j,i}^{n+2}x_2, && 1\leq i\leq n, \ 1\leq j\leq 2,\\
\varphi(x_i,x_j) & =\sum\limits_{k=1}^na_{n+i,n+j}^ke_k+a_{n+i,n+j}^{n+1}x_1+a_{n+i,n+j}^{n+2}x_2, &&  1\leq i,j\leq 2.
\end{align*}

For $\varphi\in ZL^2(R(F_n^1),R(F_n^1))$ we shall verify  equation \eqref{E.Z2}.

Consider $b=c$. Then we get $[a,\varphi(b,b)]=0$ for all $a\in L$. From the multiplication table of the algebra $R(F_n^1)$ it is easy to see that $\varphi(b,b)\in I \ \text{for all}\ b\in L$,
where $I=\spann \langle e_2,\dots,e_n\rangle$.

If $b,c\in I$, then we obtain $[a,\varphi(b,c)]=0$ for all $a\in L$, and consequently, $\varphi(b,c)\in I$, i.e.
$\varphi(I,I)\subseteq I$.

We consider the combinations of $(a,b,c), \ \text{with} \ a,b,c\in R(F_n^1)$.

If $(a,b,c)=(x_2,x_1,e_i)$, $i\neq 1$, then we derive $\varphi(x_2,I)\subseteq Q$, where $Q$  is the complementary vector
space of the nilradical $F_n^1$ to the algebra $R(F_n^1)$. The triple $(a,b,c)=(e_i,x_2,e_j)$, $j\neq 1$, implies $[e_i,\varphi(x_2,e_j)]=0$, from which we get $\varphi(x_2,I)=0$.

If we take $(a,b,c)=(e_i,x_1,e_j)$, $ j\neq 1$, then we deduce $\varphi(x_1,e_j)\in \langle e_1,x_2 \rangle$.

By considering $(a,b,c)=(x_1,e_i,e_1)$ and $(a,b,c)=(x_1,e_i,x_1)$, $ i\neq 1$, we obtain $\varphi(x_1,I)\subseteq  \langle e_1 \rangle$.

The triples $(a,b,c)=(e_1,x_1,e_i)$, $i\neq 1$, imply $\varphi(e_1,I)\subseteq \langle e_1,e_2 \rangle$.

By considering the triples $(a,b,c)=(e_1,x_2,e_i)$, $i\neq 1$, we get $\varphi(e_1,I)\subseteq \langle e_1 \rangle$.

Similarly, we obtain
\begin{align*}
(e_n,x_1,e_1) & \Rightarrow \ \varphi(e_n,e_1)\in F_n^1;\\
(x_1,x_2,e_1) & \Rightarrow \ \varphi(e_1,x_2)\in F_n^1;\\
(x_2,x_i,e_1), \, i=1,2 & \Rightarrow \ \varphi(x_2, e_1)\in F_n^1 \setminus \{e_2\};\\
(x_1,e_i,x_1), \,  i\neq 1 & \Rightarrow \ \varphi(e_2, x_1)\in R(F_n^1) \setminus \{e_1\};\\
(e_1,e_1,x_2) & \Rightarrow \ \varphi(e_1, e_1)\in I \setminus \{e_2\}.
\end{align*}

Given the restrictions above, by applying the multiplication of the algebra $R(F_n^1)$ and the property of cocycle for $(d^2\varphi)(e_1,e_i,e_1)=0$, we obtain
\[\varphi(e_1,e_2)=a_{1,2}^1e_1, \qquad \varphi(e_1,e_i)=-a_{i-1,1}^{n+1}e_1,\quad 3\leq i\leq n.\]

Analogously, from $(d^2\varphi)(e_i,e_1,e_j)=0$ and $(d^2\varphi)(e_i,e_j,e_1)=0, \ 1\leq i,j\leq n$, we conclude
\begin{align*}
\varphi(e_1,e_1)  & = \sum\limits_{k=3}^{n}a_{1,1}^ke_k, \\
\varphi(e_n,e_1)& =a_{n-1,1}^{n+1}e_1+\sum\limits_{k=2}^na_{n,1}^ke_k,\\
\varphi(e_i,e_2) & =((i-2)a_{1,2}^1+a_{2,2}^2)e_i+\sum\limits_{k=i+1}^na_{2,2}^{k-i+2}e_k, \qquad 2\leq i\leq n,\\
\varphi(e_i,e_3) & = -\big((i-1)a_{2,1}^{n+1}+a_{2,1}^{n+2}\big)e_i-(a_{1,2}^1+a_{2,1}^1)e_{i+1}, \quad 2\leq i\leq n,\\
\varphi(e_i,e_j) & = -\big((i-1)a_{j-1,1}^{n+1}+a_{j-1,1}^{n+2}\big)e_i+(a_{j-2,1}^{n+1}-a_{j-1,1}^1)e_{i+1}, \quad 2\leq i\leq n,\ 4\leq j\leq n.
\end{align*}

The equations $(d^2\varphi)(e_i,e_j,x_1)=0, \ 1\leq i,j\leq n$, imply
\begin{align*}
\varphi(e_1,x_1) & =a_{1,n+1}^1e_1+\sum\limits_{k=3}^{n-1}(k-2)a_{1,1}^{k+1}e_k+a_{1,n+1}^ne_n
+a_{1,n+1}^{n+1}x_1+a_{1,n+1}^{n+2}x_2,\\
\varphi(e_2,x_1) & =\sum\limits_{k=2}^na_{2,n+1}^ke_k-a_{1,2}^1x_1+(a_{1,2}^1-a_{2,2}^2)x_2,\\
\varphi(e_3,x_1) & =(a_{1,2}^1+a_{2,1}^1)e_1+(a_{2,1}^2+a_{1,n+1}^{n+1}+a_{1,n+1}^{n+2})e_2+(a_{2,n+1}^2+a_{1,n+1}^1)e_3\\
{} & +\sum\limits_{k=4}^n\big(a_{2,n+1}^{k-1}-(k-3)a_{2,1}^k\big)e_k+2a_{2,1}^{n+1}x_1+2a_{2,1}^{n+2}x_2,\\
\varphi(e_i,x_1) & =(i-2)(a_{i-1,1}^1-a_{i-2,1}^{n+1})e_1+\sum\limits_{k=2}^{i-2}(i-k)\sum\limits_{j=1}^{k-1}a_{i-j,1}^{k-j+1}e_k\\
{} & +\Big(\sum\limits_{k=2}^{i-1}a_{k,1}^k+\frac{(i-1)(i-2)}{2}a_{1,n+1}^{n+1}+(i-2)a_{1,n+1}^{n+2}\Big)e_{i-1}+(a_{2,n+1}^2+(i-2)a_{1,n+1}^1)e_i\\
{} & +\sum\limits_{k=i+1}^n\Big(a_{2,n+1}^{k-i+2}-(k-i)\sum\limits_{j=2}^{i-1}a_{j,1}^{k-i+j+1}\Big)e_k+(i-1)a_{i-1,1}^{n+1}x_1+(i-1)a_{i-1,1}^{n+2}x_2, \qquad 4\leq i\leq n,\\
\varphi(e_n,e_1) & =a_{n-1,1}^{n+1}e_1-\sum\limits_{k=3}^{n-1}\Big(\sum\limits_{j=2}^{k-2}a_{n-j,1}^{k-j}\Big)e_k-
\Big(\frac{n(n-1)}{2}a_{1,n+1}^{n+1}+(n-1)a_{1,n+1}^{n+2}+\sum\limits_{k=2}^{n-1}a_{k,1}^k\Big)e_n,\\
a_{2,2}^k &=0, \qquad 4\leq k\leq n.
\end{align*}

The equations $(d^2\varphi)(e_1,x_1,e_1)=0$ and $(d^2\varphi)(e_2,x_1,e_1)=0$ imply
\[a_{1,n+1}^i=-a_{n+1,1}^i, \qquad i=1,n+1,n+2.\]

From the conditions $(d^2\varphi)(e_i,x_1,e_j)=0$ for $2\leq i,j\leq n$, we conclude
\begin{align*}
\varphi(x_1,e_2)  & =-a_{2,2}^3e_1, \\
\varphi(x_1,e_3)  & =(a_{1,2}^1+a_{2,1}^1)e_1,\\
\varphi(x_1,e_i) &  =(a_{i-1,1}^1-a_{i-2,1}^{n+1})e_1,\qquad 4\leq i\leq n.\\
\end{align*}
The equalities $(d^2\varphi)(e_i,e_j,x_2)=0, \ 1\leq i,j\leq n$, determine
\begin{align*}
\varphi(e_1,x_2) & =a_{1,n+2}^1e_1+\sum\limits_{k=2}^{n-1}a_{1,1}^{k+1}e_k+a_{1,n+2}^ne_n,\\
\varphi(e_2,x_2) & =-a_{2,2}^3e_1+\sum\limits_{k=2}^na_{2,n+2}^ke_k-a_{1,2}^1x_1+(a_{1,2}^1-a_{2,2}^2)x_2,\\
\varphi(e_3,x_2)& =(a_{1,2}^1+a_{2,1}^1)e_1+(a_{2,n+2}^2+a_{1,n+2}^1)e_3+\sum\limits_{k=4}^na_{2,n+2}^{k-1}e_k+a_{2,1}^{n+1}x_1+a_{2,1}^{n+2}x_2,\\
\varphi(e_i,x_2) & =(a_{i-1,1}^1-a_{i-2,1}^{n+1})e_1+(a_{2,n+2}^2+(i-2)a_{1,n+2}^1)e_i+
\sum\limits_{k=i+1}^na_{2,n+2}^{k-i+2}e_k+a_{i-1,1}^{n+1}x_1+a_{i-1,1}^{n+2}x_2,\ 4\leq i\leq n.
\end{align*}

From the equality $(d^2\varphi)(e_2,x_2,e_1)=0$ we get $a_{1,n+2}^1=-a_{n+2,1}^1$.

Given the restrictions above, by using the property of cocycle for $(d^2\varphi)(x_i,x_j,e_1)=0$ for $1\leq i,j\leq 2$, we obtain
\begin{align*}
\varphi(x_1,e_1) & =a_{n+1,1}^1e_1+a_{1,1}^3e_2+\sum\limits_{k=3}^{n}a_{n+1,1}^ke_k+a_{n+1,1}^{n+1}x_1+a_{n+1,1}^{n+2}x_2,\\
\varphi(x_1,x_1) & =\sum\limits_{k=2}^{n-2}(k-1)(a_{n+1,1}^{k+1}-a_{1,1}^{k+2})e_k+((n-2)a_{n+1,1}^n-a_{1,n+1}^n)e_{n-1}+a_{n+1,n+1}^ne_n,\\
\varphi(x_2,x_1) & =a_{n+2,n+1}^1e_1+\sum\limits_{k=2}^{n-1}(k-1)a_{n+2,1}^{k+1}e_k+a_{n+2,n+1}^ne_n+a_{n+2,n+1}^{n+2}x_2,\\
\varphi(x_1,x_2) & =a_{n+1,n+2}^1e_1+\sum\limits_{k=2}^{n-2}(a_{n+1,1}^{k+1}-a_{1,1}^{k+2})e_k+(a_{n+1,1}^n-a_{1,n+2}^n)e_{n-1}+a_{n+1,n+2}^ne_n+a_{n+1,n+2}^{n+2}x_2,\\
\varphi(x_2,x_2) & =\sum\limits_{k=2}^{n-1}a_{n+2,1}^{k+1}e_k+a_{n+2,n+2}^ne_n.
\end{align*}

By checking the general condition of cocycle for the other basis elements, we get the following
\begin{align*}
(d^2\varphi)(e_1,x_1,x_2)=0  & \Rightarrow \ a_{1,n+1}^n=(n-2)a_{1,n+2}^n;\\
(d^2\varphi)(e_2,x_1,x_2)=0  & \Rightarrow \ a_{n+1,n+2}^1=-a_{2,n+2}^3,\ a_{n+1,n+2}^{n+2}=a_{2,n+2}^k=0, \quad 4\leq k\leq n;\\
(d^2\varphi)(e_2,x_2,x_1)=0  &  \Rightarrow \ a_{n+2,n+1}^1=a_{2,n+2}^3,\ a_{n+2,n+1}^{n+2}=0.
\end{align*}

Considering the equations $(d^2\varphi)(x_i,x_j,x_k)=0$ for $1\leq i,j,k\leq 2$, we have
\[x_i\varphi(x_j,x_k)-\varphi(x_i,x_j)x_k+\varphi(x_i,x_k)x_j=0,\]
from which we obtain
\[a_{n+1,n+1}^n=(n-1)a_{n+1,n+2}^n, \qquad a_{n+2,n+1}^n=(n-1)a_{n+2,n+2}^n.\]

Thus, we have a general form of  the 2-cocycle $\varphi$.
\end{proof}

Proposition \ref{pr2} implies the following corollary.

\begin{cor}
$\dim ZL^2(R(F_n^1),R(F_n^1))=(n+2)^2-3$ and $\dim HL^2(R(F_n^1),R(F_n^1))=0$.
\end{cor}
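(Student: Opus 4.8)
The plan is to turn the statement into a dimension count that plays the two preceding propositions off against each other, using only the elementary fact that every $2$-coboundary is a $2$-cocycle, i.e. $BL^2\big(R(F_n^1),R(F_n^1)\big)\subseteq ZL^2\big(R(F_n^1),R(F_n^1)\big)$.

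First I would record $\dim BL^2$. The map $d\mapsto f_d$, $f_d(a,b)=[d(a),b]+[a,d(b)]-d([a,b])$, is a linear map from $\Hom\big(R(F_n^1),R(F_n^1)\big)$ onto $BL^2\big(R(F_n^1),R(F_n^1)\big)$ whose kernel is exactly $\Der\big(R(F_n^1)\big)$. By Proposition~\ref{pr1} the latter is $3$-dimensional, the free parameters being $\alpha_1$, $\beta_2$, $\beta_3$; since $\dim\Hom\big(R(F_n^1),R(F_n^1)\big)=(n+2)^2$, this gives
\[\dim BL^2\big(R(F_n^1),R(F_n^1)\big)=(n+2)^2-3=n^2+4n+1,\]
recovering the value quoted just before Proposition~\ref{pr2}.

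Next I would read $\dim ZL^2$ off the normal form of Proposition~\ref{pr2}. In that proposition every value $\varphi(u,v)$ on a pair of basis vectors is written in terms of a fixed list of coefficients $a_{i,j}^k$, and each member of that list is itself a structure coefficient of $\varphi$ on a basis pair; hence the assignment sending a tuple of these coefficients to the corresponding $\varphi$ is a linear isomorphism onto $ZL^2\big(R(F_n^1),R(F_n^1)\big)$, so $\dim ZL^2$ equals the number of entries in the list. Collecting them from Proposition~\ref{pr2}: the $n-2$ coefficients $a_{1,1}^3,\dots,a_{1,1}^n$ from $\varphi(e_1,e_1)$; the $(n-2)(n+2)$ coefficients $a_{i,1}^k$ with $2\le i\le n-1$, $1\le k\le n+2$, from $\varphi(e_i,e_1)$; the $n+1$ coefficients $a_{n+1,1}^1,a_{n+1,1}^3,\dots,a_{n+1,1}^{n+2}$ from $\varphi(x_1,e_1)$; the $n-1$ coefficients $a_{n+2,1}^1,a_{n+2,1}^3,\dots,a_{n+2,1}^n$ from $\varphi(x_2,e_1)$; the $n-1$ coefficients $a_{2,n+1}^2,\dots,a_{2,n+1}^n$ from $\varphi(e_2,x_1)$; and the eight scalars $a_{1,2}^1$, $a_{2,2}^2$, $a_{2,2}^3$, $a_{1,n+2}^n$, $a_{2,n+2}^2$, $a_{2,n+2}^3$, $a_{n+1,n+2}^n$, $a_{n+2,n+2}^n$. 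Every remaining coefficient occurring in Proposition~\ref{pr2} is an explicit linear combination of the ones just listed, so the count is exactly
\[(n-2)+(n-2)(n+2)+(n+1)+(n-1)+(n-1)+8=n^2+4n+1=(n+2)^2-3,\]
whence $\dim ZL^2\big(R(F_n^1),R(F_n^1)\big)=(n+2)^2-3$.

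Finally I would combine the two computations: $BL^2\subseteq ZL^2$ and both spaces have the same finite dimension $(n+2)^2-3$, so they coincide, and therefore $HL^2\big(R(F_n^1),R(F_n^1)\big)=ZL^2/BL^2=0$. The only place that demands care is the parameter count in the third step: one has to run through Proposition~\ref{pr2} line by line and check that the displayed list is both complete (no free coefficient is overlooked) and irredundant (none is listed twice, and each is genuinely free rather than constrained), since all the other coefficients appearing there are forced. The remaining steps are immediate from the general theory of Leibniz cohomology.
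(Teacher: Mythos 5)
Your proposal is correct and follows exactly the argument the paper intends (the paper leaves the corollary's proof implicit, having already recorded $\dim BL^2=n^2+4n+1$ from Proposition~\ref{pr1} via the kernel-of-the-coboundary-map observation, and the free parameters you list from Proposition~\ref{pr2} do total $n^2+4n+1=(n+2)^2-3$). Nothing to add.
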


Thus, according to the results of the paper \cite{Bal}, we have the following  theorem.
\begin{teo}
The algebra $R(F_n^1)$ is rigid.
\end{teo}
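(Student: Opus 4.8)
The plan is to deduce the statement directly from the vanishing of the second cohomology group. Indeed, by \cite{Bal}, for a Leibniz algebra $L$ the condition $HL^2(L,L)=0$ is sufficient for $L$ to be rigid, so it suffices to prove $HL^2\bigl(R(F_n^1),R(F_n^1)\bigr)=0$, i.e. that every $2$-cocycle is a $2$-coboundary. Since $BL^2\subseteq ZL^2$ always holds, this reduces to the numerical identity $\dim ZL^2 = \dim BL^2$, and the whole argument is therefore organised around computing these two dimensions.

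First I would determine $\dim BL^2\bigl(R(F_n^1),R(F_n^1)\bigr)$. A $2$-coboundary is $f(a,b)=[d(a),b]+[a,d(b)]-d([a,b])$ for a linear map $d$, so $BL^2$ is the image of the coboundary map $d\mapsto f$, whose kernel is precisely $\Der\bigl(R(F_n^1)\bigr)$. Hence $\dim BL^2=(n+2)^2-\dim\Der\bigl(R(F_n^1)\bigr)$, and Proposition~\ref{pr1}, obtained by imposing the derivation identity on the basis products of the multiplication table, shows that the derivation space is cut out to a small family, yielding $\dim BL^2=n^2+4n+1$.

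Next I would solve the cocycle equation \eqref{E.Z2}. The key technical step is to evaluate $(d^2\varphi)(a,b,c)=0$ on a carefully ordered sequence of triples $(a,b,c)$ of basis elements: starting with $b=c$ and with $b,c$ in the ideal $I=\langle e_2,\dots,e_n\rangle$ to force $\varphi$ into $I$ on the relevant slots, then triples involving $x_1$ and $x_2$ to pin down $\varphi(x_j,e_i)$ and $\varphi(e_i,x_j)$, and finally the remaining triples among $\{e_i,e_j,e_1\}$, $\{e_i,e_j,x_k\}$, $\{x_i,x_j,e_1\}$ and $\{x_i,x_j,x_k\}$ to express every structure constant of $\varphi$ in terms of the finitely many free parameters recorded in Proposition~\ref{pr2}. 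Counting those parameters gives $\dim ZL^2=(n+2)^2-3=n^2+4n+1$, so $\dim ZL^2=\dim BL^2$ and therefore $HL^2=0$, which is exactly the Corollary; rigidity then follows from \cite{Bal}.

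The main obstacle is the cocycle computation underlying Proposition~\ref{pr2}: equation \eqref{E.Z2} is a large, highly coupled linear system, and one must choose the order of the evaluating triples so that each new triple involves only already-determined components of $\varphi$. In particular, the three linear relations that make $\dim ZL^2$ drop from $(n+2)^2$ to $(n+2)^2-3$ — namely $a_{1,n+1}^i=-a_{n+1,1}^i$ coming from $(d^2\varphi)(e_1,x_1,e_1)=0$ and $(d^2\varphi)(e_2,x_1,e_1)=0$, the relation $a_{1,n+2}^1=-a_{n+2,1}^1$ from $(d^2\varphi)(e_2,x_2,e_1)=0$, together with the constraints coming from $(d^2\varphi)(e_2,x_1,x_2)=0$, $(d^2\varphi)(e_2,x_2,x_1)=0$ and the triples $(x_i,x_j,x_k)$ — must be extracted without double counting, and this bookkeeping is the genuinely delicate part; once it is in place, the final theorem is immediate.
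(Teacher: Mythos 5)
Your proposal is correct and follows essentially the same route as the paper: compute $\dim BL^2=(n+2)^2-\dim\Der\bigl(R(F_n^1)\bigr)=n^2+4n+1$ from Proposition~\ref{pr1}, compute $\dim ZL^2=(n+2)^2-3$ from the cocycle analysis of Proposition~\ref{pr2}, observe the two dimensions coincide so that $HL^2\bigl(R(F_n^1),R(F_n^1)\bigr)=0$, and invoke \cite{Bal}. The identification of the kernel of the coboundary map with the derivation algebra and the ordering strategy for the evaluating triples match the paper's argument.
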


\section*{Acknowledgments}
This work was partially supported by Ministerio de Econom\'ia y Competitividad (Spain),
grant MTM2013-43687-P (European FEDER support included),   by Xunta de Galicia, grant GRC2013-045 (European FEDER support included), Grant
No. 0828/GF4 of Ministry of Education and Science of the Republic of Kazakhstan. The last named
author was partially supported by a grant from the Simons Foundation.


\end{document}